%
\documentclass[12pt, reqno]{amsart}
\usepackage{amsmath, amsthm, amscd, amsfonts, amssymb, graphicx, color}
\usepackage[bookmarksnumbered, colorlinks, plainpages]{hyperref}

\textheight 22.5truecm \textwidth 14.5truecm
\setlength{\oddsidemargin}{0.35in}\setlength{\evensidemargin}{0.35in}

\setlength{\topmargin}{-.5cm}

\newtheorem{theorem}{Theorem}[section]

\newtheorem{corollary}[theorem]{Corollary}
\theoremstyle{definition}
\newtheorem{definition}[theorem]{Definition}

\theoremstyle{remark}

\numberwithin{equation}{section}

\begin{document}

\setcounter{page}{1}

\title[Short Title]{{STRONGLY\;\textbf SUMMABLE\;\;
FIBONACCI\;\; DIFFERENCE\;\; GEOMETRIC  SEQUENCES\;\; DEFINED\;\; BY\;\;
ORLICZ \;\;FUNCTIONS}\\}

\author[Salila Dutta, Saubhagyalaxmi Singh, Sagarika Dash]{Salila Dutta$^1$, Saubhagyalaxmi Singh$^2$ $^{*}$, Sagarika Dash $^3$}

\address{$^{1,3}$ Department of Mathematics, Utkal University ,Odisha, India .}
\email{saliladutta516@gmail.com}

\address{$^{2}$ Department of Mathematics, Centurion University of Technology and Management, Odisha, India}
\email{ssaubhagyalaxmi@gmail.com, saubhagyalaxmi.singh@cutm.ac.in }

\subjclass[2010]{Primary 46A45; Secondary 46B45, 46A35.}

\keywords{Orlicz Function, Fibonacci Sequence space, summability, geometric}

\date{Received: xxxxxx; Revised: yyyyyy; Accepted: zzzzzz.
\newline \indent $^{*}$ Corresponding author}

\begin{abstract}
The purpose of this paper is to introduce the space of geometric  sequences
that are strongly summable with respect to an Orlicz function and the Fibonacci difference
sequences.Also some topological properties and
 inclusion relations between the resulting geometric sequence spaces are discussed
 here.
\end{abstract} \maketitle

\section{Introduction and preliminaries}
Let $\Lambda=(\lambda_{n})$,\;\;be a non-decreasing sequence of
positive reals tending to infinity and\;$\lambda_{1}=1$
\;and\;$\lambda_{n+1} \leq \lambda_{n}+1$.The generalized de la
Vallee-Pousin means is defined by
$t_{n}(x)=\frac{1}{\lambda_{n}}\displaystyle\sum_{k\;\epsilon\;I_{n}}x_{k}$
where $I_{n}=[n - \lambda_{n}+1,n]$.\\ A sequence
$x=(x_{k})$\;\;is said to be\;$(V,\lambda)$-summable to a
number\;$\ell$\;\;\cite{12}\;. If $t_{n}(x) \rightarrow
\ell$\;as\;$n \rightarrow \infty.$ $(V,\lambda)-$summability
reduces to\;$(C,1)-$summability\;\;where\;$\lambda_{n}=n$\;\\for
all n. We
write\\$[C,1]=\left\{x=(x_{k})\;\epsilon\;\omega:\displaystyle
\lim_{n \rightarrow
\infty}\frac{1}{n}\sum_{k=1}^{n}|x_{k}-\ell|=0\;\; for\;\;
some\;\;
\ell\right\}$\;\;\;and\\$[V,\lambda]=\left\{x=(x_{k})\;\epsilon\;\omega:\displaystyle
\lim_{n \rightarrow
\infty}\frac{1}{\lambda_{n}}\sum_{k\;\epsilon\;I_{n}}|x_{k}-\ell|=0\;\;
for\;\; some\;\; \ell\right\}$\\for the sets of the
sequences\;$x=(x_{k})$\;which are strongly Cesaro summable and
strongly\\$(V,\lambda)-$summable to $\ell$\;\;i.e.\;\;$x_{k}
\rightarrow \ell[C,1]$\;and\;$x_{k} \rightarrow
\ell[V,\lambda]$\;respectively.\\Let $X$ \;and\;$Y\;$be two
sequence spaces\;and\;$A=(a_{nk})$\;be an infinite matrix of real
numbers \;$a_{nk}$\;,where \;$n,k\;\epsilon\;\mathbb{N}$.\par A
defines a matrix mapping from \;X\;into\;Y\;and\; is written
as\;$A:X \rightarrow Y$\;if for every sequence
$x=(x_{k})_{k=0}^{\infty}\;\epsilon\;X,$\;the sequence
\;$A(x)=\left\{A_{n}(x)\right\}_{n=0}^{\infty}$,the \;A-transform
of $x$ is in Y,where\\$A_{n}(x)=\sum
a_{nk}x_{k}$\;\;$(n\;\epsilon\;\mathbb{N}).$\;\;For simplicity in
notation,throughout this paper summation without limits runs from
\;$0$\;to\; $\infty.$ \\The matrix domain $X_{A}$\;of an infinite
matrix A,  is a sequence space X is defined
by$$X_{A}=\left\{x=(x_{k})\;\epsilon\;\omega:Ax\;\epsilon\;X\right\}$$which
is a sequence space.The approach of constructing a new sequence
space by means of matrix domain of particular limitation method
has been employed by several authors \cite{1,3,4,5,6,16,17,21,22,23,24,25}.\\ The
matrix domain \;$\mu_{\Delta}$\;is called the difference sequence
space whenever \;$\mu$\;is a normed or paranormed sequence
space.The idea of difference sequence spaces was first introduced
by K{\i}zmaz\; \cite{9}.In fact he has defined the sequence spaces
$$X(\Delta)=\left\{x\;\epsilon\;\omega:\Delta x\;\epsilon\;X\right\}$$
where\;$\Delta x\;=\;(\Delta x_{k})\;=\;(x_{k}-x_{k+1}),$\\for
$X=\ell_{\infty},c\;and\;c_{0}$\;. Atlay and
Basar\;\cite{2}\;introduced the difference sequence spaces
$bv_{p},$\;consisting of all sequences $(x_{k})$\;such that
$(x_{k}-x_{k-1})$\;is in $\ell_{p}$\;.\paragraph{}The operators
$\Delta^{m}:\omega \rightarrow \omega,$\;for
$m\;\epsilon\;\mathbb{N},$is defined
by\;$\Delta^{m}x_{k}=\Delta^{m-1}x_{k}-\Delta^{m-1}x_{k+1}.$Et and
\c{C}olak\cite{8}\;generalized the sequence
spaces\;$X(\Delta^{m})=\left\{x\;\epsilon\;\omega:\Delta^{m}x\;\epsilon\;X\right\}$
for $X=\ell_{\infty},c\;and\;c_{0}$\;.
\paragraph{}Then Et and Bektas\;\cite{7}\;introduced the
sequence spaces\\
$(C,1)(\Delta^{m})=\left\{x\;\epsilon\;\omega:\displaystyle\lim_{n
\rightarrow
\infty}\frac{1}{n}\displaystyle\sum_{k=1}^{n}(\Delta^{m}x_{k}-\ell)=0\;\;
for\;\; some\;\; \ell\right\}$\\
$[C,1](\Delta^{m})=\left\{x\;\epsilon\;\omega:\displaystyle\lim_{n
\rightarrow
\infty}\frac{1}{n}\displaystyle\sum_{k=1}^{n}|\Delta^{m}x_{k}-\ell|=0\;\;
for\;\; some\;\; \ell\right\}$\\
$(V,\lambda)(\Delta^{m})=\left\{x\;\epsilon\;\omega:\displaystyle
\lim_{n \rightarrow
\infty}\frac{1}{\lambda_{n}}\displaystyle\sum_{k\;\epsilon\;I_{n}}(\Delta^{m}x_{k}-\ell)=0\;\;
for\;\; some\;\; \ell\right\}$\\
$[V,\lambda](\Delta^{m})=\left\{x\;\epsilon\;\omega:\displaystyle
\lim_{n \rightarrow
\infty}\frac{1}{\lambda_{n}}\displaystyle\sum_{k\;\epsilon\;I_{n}}|\Delta^{m}x_{k}-\ell|=0\;\;
for\;\; some\;\; \ell\right\}$\\and studied their various
topological properties.\\\\Savas\;\cite{26} introduced the classes\\
$[V,\lambda]_{0}=\left\{x=x_{k}:\displaystyle\lim_{n}\frac{1}{\lambda_{n}}\displaystyle\sum_{k\;\epsilon\;I_{n}}|x_{k}|=0\right\}$\\
$[V,\lambda]=\left\{x=x_{k}:\displaystyle\lim_{n}\frac{1}{\lambda_{n}}\displaystyle\sum_{k\;\epsilon\;I_{n}}|x_{k}-\ell|=0,\;\;\;for\;
some\;\;\ell\;\epsilon\;C
\right\}$and\\$[V,\lambda]_{\infty}=\left\{x=x_{k}:\displaystyle
\sup_{n}\frac{1}{\lambda_{n}}\displaystyle\sum_{k\;\epsilon\;I_{n}}|x_{k}|
< \infty\right\}$\\ which are strongly summable to zero,strongly
summable and strongly bounded by the de la Vall\'{e}e-Poussin
method.In the special case where
$\lambda=n$\;\;for\;$n=1,2,3,....,$\;the sets $[V,\lambda]_{0}$,\;
$[V,\lambda]$\; $[V,\lambda]_{\infty}$\;\\reduce to the sets,\;
$\omega_{0}$,\;$\omega$,\;and\;$\omega_{\infty}$\;respectively
introduced  by Maddox.\;\cite{15}.\paragraph{} An Orlicz function M is
a continuous,convex,non-decreasing function defined for $x \geq 0$
such that $M(0)=0$.and $M(x) \geq 0$\;for\;$x > 0.$ If convexity
of Orlicz function M is replaced by $M(x+y) \leq M(x)+M(y)$\;then
the function is called a modulus function,defined and discussed by
Nakano\;\cite{18},\\Ruckle\; \cite{20},Maddox\;\cite{14} and others. \\
Lindenstrauss and Tzafriri \;\cite{13} used the idea of Orlicz function
to construct the sequence
space$$\ell_{M}=\left\{x=(x_{k}):\sum_{k=1}^{\infty}M\Big(\frac{|x_{k}|}{\rho}\Big)
< \infty\;\;\;for\; some\;\;\rho > 0\right\}$$ \;with the
norm\;\;\;$\|x\|=\inf\left\{\rho >
0:\displaystyle\sum_{k=1}^{\infty}M\Big(\frac{|x_{k}|}{\rho}\Big)
\leq 1\right\} $\\which becomes a Banach space and is called an
Orlicz sequence space.For $M(x)=x^{p}$,\;\\$1 \leq p \leq
\infty$,\;the space $\ell_{M}$,\;coincides\; with the classical
sequence space $\ell_{p}$.\paragraph{} Parashar and
Choudhary\; \cite{19}  have introduced the sequence
spaces\\$W(M,p)_{0}=\left\{x\;\epsilon\;\omega:\frac{1}{n}\displaystyle
\sum_{k=1}^{n}\Bigg (M\Big(\frac{|x_{k}|}{\rho}\Big)\Bigg)^{p_{k}}
\rightarrow 0\;\;as\;n \rightarrow \infty,\;for some \;\rho\;
and\; \ell
> 0\; \right\}$\\
$W(M,p)=\left\{x\;\epsilon\;\omega:\frac{1}{n}\displaystyle
\sum_{k=1}^{n}\Bigg
(M\Big(\frac{|x_{k}-\ell|}{\rho}\Big)\Bigg)^{p_{k}} \rightarrow
0\;\;as\;n \rightarrow \infty,\;for some \;\rho\; and\; \ell
> 0\; \right\}$\\$W(M,p)_{\infty}=\left\{x\;\epsilon\;\omega:sup_{n}\frac{1}{n}\displaystyle
\sum_{k=1}^{n}\Bigg (M\Big(\frac{|x_{k}|}{\rho}\Big)\Bigg)^{p_{k}}
\rightarrow 0\;\;as\;n \rightarrow \infty,\;for some \;\rho\;
and\; \ell
> 0\; \right\}$\\\\which are complete paranormed spaces and these
classes  generalize the strongly summable sequence\\ spaces
$[C,1,p]_{0},[C,1,p]\;and\;[C,1,p]_{\infty}.$ \\ Let  $p=(p_{k})$
be any sequence of strictly positive real numbers.Then Savas and
Savas\; \cite{26} \\ introduced the spaces\\
$[V,M,p]_{0}=\left\{x=x_{k}:\displaystyle\lim_{n}\frac{1}{\lambda_{n}}\sum_{k\;\epsilon\;I_{n}}
\Big[M\Big(\frac{|x_{k}|}{\rho}\Big)\Big]^{p_{k}}=0\;\;for\;some\;\rho
> 0\right\}$\\
$[V,M,p]=\left\{x=x_{k}:\displaystyle\lim_{n}\frac{1}{\lambda_{n}}\sum_{k\;\epsilon\;I_{n}}
\Big[M\Big(\frac{|x_{k}-\ell|}{\rho}\Big)\Big]^{p_{k}}=0\;\;for\;some
\;\ell\;and\;\rho
> 0\right\}$\\
$[V,M,p]_{\infty}=\left\{x=x_{k}:\displaystyle\sup_{n}\frac{1}{\lambda_{n}}\sum_{k\;\epsilon\;I_{n}}
\Big[M\Big(\frac{|x_{k}|}{\rho}\Big)\Big]^{p_{k}} <
\infty\;\;for\;some\;\rho
> 0\right\}$\\\\where M is an Orlicz function.\\

\begin{definition}
A sequence\;$x=(x_{k})$\;is said to be $\lambda-$ statistical
convergent or $S_{\lambda}-$ convergent to\;\; $\ell$\;\;if for
every $\varepsilon > 0$\;$$\displaystyle\lim_{n \rightarrow
\infty}\frac{1}{\lambda_{n}}|\left\{k\;\epsilon\;I_{n}:|y_{k}-\ell|
\geq \varepsilon\right\}|=0.$$where the vertical bars indicate the
number of elements in the enclosed  set.\\In this case we write
$S_{\lambda}-lim\; x$\;or\;$x_{k} \rightarrow
\ell(S_{\lambda})$,and$$S_{\lambda}=\left\{x\;\epsilon\;\omega:S_{\lambda}-lim\;
 x=\ell\;\;for\;some\;\ell\right\}$$
\end{definition}
Et and Bektas showed that
$S(\Delta^{m})\;\subset\;S_{\lambda}(\Delta^{m})$\;if and only
if\;\;$\displaystyle\lim_{n \rightarrow
\infty}\inf\frac{\lambda_{n}}{n}
> 0$
\section{Fibonacci difference sequence spaces}
The sequence \;$\left\{f_{n}\right\}_{n=0}^{\infty}$\;of Fibonacci
numbers satisfies\;$f_{0}=f_{1}=1$ and $f_{n}=f_{n-1}+f_{n-2}$,$n
\geq 2$and\; has applications in
arts,sciences\;and\;architecture.Some basic properties of
Fibonacci numbers\;\cite{10} are given as follows:\\\\
$\displaystyle\lim_{n \rightarrow
\infty}\frac{f_{n+1}}{f_{n}}=\frac{1+\sqrt{5}}{2}=\alpha$\;\;\;(golden
ratio),
\\$\sum_{k=0}^{n}f_{k}=f_{n+2}-1$ \;\;\;$(n\;\epsilon\;\mathbb{N})$\\
$\sum_{k}\frac{1}{f_{k}}$\;\;\;converges,\\$f_{n-1}f_{n+1}-f_{n}^{2}=(-1)^{n+1}$
\;\;\;$(n\;\geq\;1)$\;\;(Cassini formula).\\Substituting for
$f_{n+1}$ in Cassini formula yields
$f_{n-1}^{2}+f_{n}f_{n-1}-f_{n}^{2}=(-1)^{n+1}$.\\\\
 Let $f_{n}$ be the nth Fibonacci number for every
$n\;\epsilon\;\mathbb{N}$.Then we define the infinite matrix
$\widehat{F}=(\widehat{f_{nk}})$ by\\
\[
 (\widehat{f_{nk}})=
\begin{cases}
-\frac{f_{n+1}}{f_{n}} &  (k = n-1),  \\
\frac{f_{n}}{f_{n+1}} &  (k = n),\;\;\;\;\;\;\;(n.k\;\epsilon\;\mathbb{N}).\\
0 &  (0 \leq k < n-1\;or\;k > n ),

\end{cases}
\]
Let $X$\;be a sequence space.Then\;$X$\;is called :\\(i)Solid(or
normal)if $(a_{k}x_{k})\;\epsilon\;X$ whenever
$(x_{k})\;\epsilon\;X,$for all sequences $a_{k}$,scalars with
$|a_{k}| \leq 1.$\\(ii)Monotone provided $X$ contains the
canonical preimage of all its step spaces.\\(iii)Perfect if
$X=X^{\alpha\alpha}.$\\It is well known that $X$is perfect
$\Rightarrow X$\;is normal $\Rightarrow X$ is monotone.\\
Non- Newtonian or Geometric calculus also known as multiplicative calculus was introduced by Grossman and Katz \cite{gros} . It come up with differentiation and integration tool based on multiplication rather than addition. 

T\"{u}rkmen and Ba\c{s}ar \cite{turk} introduced geometric sequence spaces for $X=c,c_{0} ,l_{\infty } ,l_{p} $ as 
\[\begin{array}{l} {\omega \left(G\right)=\left\{x=\left(x_{k} \right):x_{k} \in C\left(G\right),\, {\rm for}\, {\rm all}\, k\in {\rm N}\right\}} \\ {l_{\infty } \left(G\right)=\left\{x=\left(x_{k} \right)\in \omega \left(G\right):\, {\mathop{\sup }\limits_{k\in {\rm N}}} \left|x_{k} \right|^{G} <\infty \right\}} \\ {c\left(G\right)=\left\{x=\left(x_{k} \right)\in \omega \left(G\right):\, G{\mathop{\lim }\limits_{k\to \infty }} \left|x_{k} {\rm \ominus }l\right|^{G} =1\right\}} \\ {c_{0} \left(G\right)=\left\{x=\left(x_{k} \right)\in \omega \left(G\right):\, G{\mathop{\lim }\limits_{k\to \infty }} x_{k} =1\right\}} \\ {l_{p} \left(G\right)=\left\{x=\left(x_{k} \right)\in \omega \left(G\right):\, G\sum _{k=0}^{\infty }\left|x_{k} \right|_{G}^{p^{G} }  <\infty \right\}} \end{array}\] 

 and the geometric complex number
\[\begin{array}{l} {{\rm C}\left(G\right):=\left\{e^{z} :z\in {\rm C}\right\}} \\ {\qquad ={\rm C}/\left\{0\right\}} \end{array}\] 

where$\left({\rm C}\left(G\right),\oplus ,\odot \right)$ is a field with geometric zero 1 and geometric identity e, and we define the geometric addition, subtraction see \cite{khir,sing1,sing2,sing3}

Now we define the following Fibonacci difference geometric sequence spaces,\\\\
$[V,\lambda,M,\widehat{F},p^G]_{0}^G=\left\{x=(x_{k})\in \omega(G):\displaystyle
G\lim_{n \rightarrow \infty}\frac{1}{\lambda_{n}}\displaystyle
G\sum_{k\;\epsilon\;I_{n}}\Bigg[M\Bigg(\frac{|\frac{f_{k}}{f_{k+1}}x_{k}\ominus\frac{f_{k+1}}{f_{k}}x_{k-1}|_G}{\rho}\Bigg)\Bigg]^{p_{k}^G}=1,\;\rho
> 1 \right\}$\\$[V,\lambda,M,\widehat{F},p^G]^G=\left\{x=(x_{k}\in \omega(G)):\displaystyle
G\lim_{n \rightarrow \infty}\frac{1}{\lambda_{n}}\displaystyle
G\sum_{k\;\epsilon\;I_{n}}\Bigg[M\Bigg(\frac{|(\frac{f_{k}}{f_{k+1}}x_{k}\ominus\frac{f_{k+1}}{f_{k}}x_{k-1})\ominus\ell|}{\rho}\Bigg)\Bigg]^{p_{k}^G}=1,\;\rho
> 1 \right\}$\\$[V,\lambda,M,\widehat{F},p]_{\infty}^G=\left\{x=(x_{k})\in \omega(G)):\displaystyle
G\lim_{n \rightarrow \infty^G}\frac{1}{\lambda_{n}}G\displaystyle
\sum_{k\;\epsilon\;I_{n}}\Bigg[M\Bigg(\frac{|\frac{f_{k}}{f_{k+1}}x_{k}\ominus\frac{f_{k+1}}{f_{k}}x_{k-1}|}{\rho}\Bigg)\Bigg]^{p_{k}^G}
< \infty,\;\rho
> 1 \right\}$\\
which are the set of all sequences whose
$\widehat{F}-$transforms are in the space\\
\;$[V,M,p]_{0}^G,[V,M,p]^G\;and\;[V,M,p]_{\infty}^G$ respectively
i.e.$[V,\lambda,M,\widehat{F},p]^G=([V,\lambda,M,p]^G)_{\widehat{F}}$\;and
so on.

\section{Main Results}
\begin{theorem}
For any Orlicz function M and any sequence\;$p^G=(p_{k})^G$\;of
strictly positive real
numbers\\$[V,\lambda,M,\widehat{F},p]_{0}^G$,\;$[V,\lambda,M,\widehat{F},p]^G$\;and\;$[V,\lambda,M,\widehat{F},p]_{\infty}^G$\;\;are
linear spaces over the set of complex numbers $\mathbb{C}(G)$.
\end{theorem}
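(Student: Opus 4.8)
The plan is to show that each of the three sets contains the geometric zero sequence and is closed under geometric addition $\oplus$ and geometric scalar multiplication $\odot$; once this is done, being a linear subspace of $\omega(G)$ over $\mathbb{C}(G)$ is immediate. The structural observation that makes everything work is that the Fibonacci difference transform $\widehat{F}$ is a \emph{linear} operator, so that for $x,y\in\omega(G)$ and geometric scalars $\alpha,\beta\in\mathbb{C}(G)$ one has $\widehat{F}\big(\alpha\odot x \oplus \beta\odot y\big)=\alpha\odot\widehat{F}x \oplus \beta\odot\widehat{F}y$. Since each of our spaces is by construction the matrix domain $\big([V,\lambda,M,p]^G\big)_{\widehat{F}}$, this reduces the whole statement to verifying linearity at the level of the $\widehat{F}$-transforms, i.e.\ inside the de la Vall\'ee--Poussin--Orlicz base spaces; a matrix domain of a linear space under a linear map is again linear.

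First I would treat $[V,\lambda,M,\widehat{F},p]^G_0$. Fix $x,y$ in this space, so there exist $\rho_1,\rho_2>1$ for which the corresponding geometric averages tend to the geometric zero $1$, and fix $\alpha,\beta\in\mathbb{C}(G)$. Put $H=\sup_k p_k$ and recall the elementary constant $C=\max(1,2^{H-1})$ appearing in $|a+b|^{p_k}\le C\big(|a|^{p_k}+|b|^{p_k}\big)$. Choosing $\rho_3=\max\big(2|\alpha|_G\rho_1,\,2|\beta|_G\rho_2\big)$ and using the geometric triangle inequality for $|\cdot|_G$, the monotonicity of $M$, and then its convexity, I would obtain the termwise bound
$$M\!\left(\frac{\big|\alpha\odot(\widehat{F}x)_{k}\oplus\beta\odot(\widehat{F}y)_{k}\big|_G}{\rho_3}\right)\;\le\;\tfrac12\,M\!\left(\frac{|(\widehat{F}x)_{k}|_G}{\rho_1}\right)+\tfrac12\,M\!\left(\frac{|(\widehat{F}y)_{k}|_G}{\rho_2}\right).$$
Raising to the power $p_k$, applying the $C$-inequality together with $(1/2)^{p_k}\le 1$, and then averaging over $k\in I_n$ with weight $1/\lambda_n$, one bounds the average for $\alpha\odot x\oplus\beta\odot y$ by $C$ times the sum of the averages for $x$ and for $y$. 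Letting $n\to\infty$ sends the right-hand side to the geometric zero, so $\alpha\odot x\oplus\beta\odot y$ belongs to the space.

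The same chain of inequalities applies verbatim to $[V,\lambda,M,\widehat{F},p]^G$, after splitting the centred term via $\ell\mapsto\alpha\odot\ell_1\oplus\beta\odot\ell_2$ with one further use of the triangle inequality, and to $[V,\lambda,M,\widehat{F},p]^G_\infty$, where the limit is replaced by a supremum over $n$ that is preserved under the same termwise estimate. Closure under $\odot$ alone then falls out as the special case in which $\beta\odot y$ is the geometric zero sequence, and the geometric zero sequence lies in all three spaces trivially. I expect the only genuinely delicate point to be the geometric bookkeeping: one must check that after expanding the geometric modulus and the geometric difference the arguments of $M$ are ordinary nonnegative reals, so that the ordinary convexity and monotonicity of the Orlicz function $M$ are legitimately applicable, and that $\rho_3$ can indeed be chosen uniformly in $n$ and $k$. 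Granting the geometric triangle inequality $|a\oplus b|_G\le|a|_G\oplus|b|_G$ and the homogeneity $|\alpha\odot a|_G=|\alpha|_G\,|a|_G$, these checks are routine.
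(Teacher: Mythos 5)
Your proposal is correct and follows essentially the same route as the paper's own proof: the same choice $\rho_3=\max(2|\alpha|\rho_1,2|\beta|\rho_2)$, the same use of the triangle inequality, monotonicity and convexity of $M$, and the same constant $\max(e,2^{H-1})$ with $H=\sup_k p_k$ to handle the powers $p_k$ before passing to the limit. The only cosmetic difference is your framing via the linearity of the matrix map $\widehat{F}$ and your explicit sketch of the $[V,\lambda,M,\widehat{F},p]^G$ and $[V,\lambda,M,\widehat{F},p]^G_\infty$ cases, which the paper simply declares to be similar.
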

\begin{proof}
We shall prove the theorem only
for\;$[V,\lambda,M,\widehat{F},p]_{0}^G$ and others can be proved
with  similar techniques.\\Let
$x,y\;\epsilon\;[V,\lambda,M,\widehat{F},p]_{0}^G$\;and\;$\alpha,\beta\;\in\;\mathbb{C}(G)$.Then
there exist some positive numbers
$\rho_{1}$\;and\;$\rho_{2}$\;such that\\
$$\displaystyle G\lim_{n \rightarrow \infty}\frac{1}{\lambda_{n}}\displaystyle  G\sum_{k\;\in\;I_{n}}
\Bigg[M\Bigg(\frac{|\frac{f_{k}}{f_{k+1}}x_{k}\ominus\frac{f_{k+1}}{f_{k}}x_{k-1}|}{\rho_{1}}\Bigg)\Bigg]^{p_{k}^G}=1$$\;\;\;and
$$\displaystyle G \lim_{n \rightarrow \infty}\frac{1}{\lambda_{n}}\displaystyle G \sum_{k\;\in\;I_{n}}
\Bigg[M\Bigg(\frac{|\frac{f_{k}}{f_{k+1}}y_{k}\ominus\frac{f_{k+1}}{f_{k}}y_{k-1}|}{\rho_{2}}\Bigg)\Bigg]^{p_{k}^G}=1.$$
Let $\rho_{3}=max(2|\alpha|\rho_{1},2|\beta|\rho_{2})$.Since M is
non-decreasing and convex,so we
have$$\frac{1}{\lambda_{n}}\displaystyle G
\sum_{k\;\in\;I_{n}}\Bigg[M\Bigg(\frac{|\frac{f_{k}}{f_{k+1}}(\alpha
x_{k})\ominus\frac{f_{k+1}}{f_{k}}(\alpha
x_{k-1})\oplus\frac{f_{k}}{f_{k+1}}(\beta
y_{k})\ominus\frac{f_{k+1}}{f_{k}}(\beta
y_{k-1})|}{\rho_{3}}\Bigg)\Bigg]^{p_{k}^G}$$
$$\leq \frac{1}{\lambda_{n}}\displaystyle\sum_{k\;\epsilon\;I_{n}}\Bigg[M\Bigg(\frac{|\frac{f_{k}}{f_{k+1}}
(\alpha x_{k})\ominus\frac{f_{k+1}}{f_{k}}(\alpha
x_{k-1})|}{\rho_{3}}\oplus\frac{|\frac{f_{k}}{f_{k+1}}(\beta
y_{k})\ominus\frac{f_{k+1}}{f_{k}}(\beta
y_{k-1})|}{\rho_{3}}\Bigg)\Bigg]^{p_{k}^G}$$
$$\leq \frac{1}{\lambda_{n}}\displaystyle
\sum_{k\;\epsilon\;I_{n}}\frac{1}{2^{p_{k}}}\Bigg[M\Bigg(\frac{|\frac{f_{k}}{f_{k+1}}
(\alpha x_{k})\ominus\frac{f_{k+1}}{f_{k}}(\alpha
x_{k-1})|}{\rho_{1}}\Bigg) \oplus M\Bigg(\frac{|\frac{f_{k}}{f_{k+1}}(\beta
y_{k})\ominus\frac{f_{k+1}}{f_{k}}(\beta
y_{k-1})|}{\rho_{2}}\Bigg)\Bigg]^{p_{k}^G}$$
$$\leq \frac{1}{\lambda_{n}}\displaystyle\sum_{k\;\epsilon\;I_{n}}\Bigg[M\Bigg(\frac{|\frac{f_{k}}{f_{k+1}}
(\alpha x_{k})\ominus\frac{f_{k+1}}{f_{k}}(\alpha
x_{k-1})|}{\rho_{1}}\Bigg)\oplus M\Bigg(\frac{|\frac{f_{k}}{f_{k+1}}(\beta
y_{k})\ominus\frac{f_{k+1}}{f_{k}}(\beta
y_{k-1})|}{\rho_{2}}\Bigg)\Bigg]^{p_{k}^G}$$
$$\leq B\frac{1}{\lambda_{n}}\displaystyle
\sum_{k\;\epsilon\;I_{n}}\Bigg[M\Bigg(\frac{|\frac{f_{k}}{f_{k+1}}
(\alpha x_{k})\ominus\frac{f_{k+1}}{f_{k}}(\alpha
x_{k-1})|}{\rho_{1}}\Bigg)\Bigg]^{p_{k}^G}$$\;\;\;$$ \oplus\;\;  B
\frac{1}{\lambda_{n}}\displaystyle\sum_{k\;\epsilon\;I_{n}}\Bigg[M\Bigg(\frac{|\frac{f_{k}}{f_{k+1}}
(\beta y_{k})\ominus\frac{f_{k+1}}{f_{k}}(\beta
y_{k-1})|}{\rho_{2}}\Bigg)\Bigg]^{p_{k}^G}\;\rightarrow\;1\oplus1=1$$\\as\;$n
\rightarrow
\infty$,\;where\;\;$B=max(e,2^{H-1})$,\;$H=\displaystyle\sup\;p_{k}.$\;Hence
$\alpha x\oplus\beta
y\;\epsilon\;[V,\lambda,M,\widehat{F},p]_{0}^G$
\end{proof}
\begin{theorem}
The space $[V,\lambda,M,\widehat{F},p]_{0}^G$\;is a paranormed
space,(not totally paranormed),paranormed\\
by,\;\;$g(x)=inf\left\{\rho^{\frac{p_{n}}{H}} G :\Bigg(\frac{1}{\lambda_{n}}\displaystyle\sum_{k\;\epsilon\;I_{n}}
\Bigg[M\Bigg(\frac{|\frac{f_{k}}{f_{k+1}}x_{k}\ominus\frac{f_{k+1}}{f_{k}}x_{k-1}|}{\rho}\Bigg)\Bigg]^{p_{k}^G}\Bigg)^\frac{1}{H}
 \leq e ,n=e,2e,3e...\right\}$\\where \;\;$H=max(e,sup\;p_{k}^G).$
\end{theorem}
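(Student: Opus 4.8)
The plan is to verify the four defining axioms of a paranorm in their geometric (non-Newtonian) form: that $g$ carries the geometric null sequence to the geometric zero $1$; that $g(\ominus x)=g(x)$; that $g$ is geometrically subadditive, $g(x\oplus y)\le g(x)\oplus g(y)$; and that geometric scalar multiplication is continuous with respect to $g$. Totality is deliberately \emph{not} claimed, since the Fibonacci difference transform $\widehat{F}$ has a nontrivial kernel, so $g(x)=1$ cannot force $x$ to be the geometric null sequence; this is exactly why the statement reads ``not totally paranormed.'' Before anything else I would record that the infimum defining $g(x)$ ranges over a nonempty set: for $x\in[V,\lambda,M,\widehat{F},p]_0^G$ the very membership in the space supplies a $\rho>1$ for which the bracketed expression is $\le e$, so $g$ is well defined.

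The easy axioms come first. For the geometric null sequence every Fibonacci difference $\frac{f_k}{f_{k+1}}x_k\ominus\frac{f_{k+1}}{f_k}x_{k-1}$ collapses to the geometric zero, whence $M(0)=0$ and the admissibility constraint holds for \emph{every} $\rho>1$; letting $\rho\to 1^{+}$ drives $\rho^{p_n/H}\to 1$, so $g$ equals the geometric zero $1$ there. Symmetry is immediate from the geometric modulus identity $|\ominus a|_G=|a|_G$, which leaves every summand, and hence the entire admissible set, unchanged when $x$ is replaced by $\ominus x$.

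For subadditivity I would reuse the mechanism of Theorem 3.1. Given $x,y$ with admissible radii $\rho_1,\rho_2$, set $\rho_3=\rho_1\oplus\rho_2$ and exploit that $\widehat{F}$ is geometrically linear together with the convexity and monotonicity of $M$ to obtain the geometric Minkowski estimate
\[
M\!\left(\frac{|\widehat{F}(x\oplus y)_k|_G}{\rho_1\oplus\rho_2}\right)\le \frac{\rho_1}{\rho_1\oplus\rho_2}\,M\!\left(\frac{|\widehat{F}x_k|_G}{\rho_1}\right)\oplus\frac{\rho_2}{\rho_1\oplus\rho_2}\,M\!\left(\frac{|\widehat{F}y_k|_G}{\rho_2}\right).
\]
Raising to the power $p_k$, summing over $I_n$, normalising by $\lambda_n$ and taking the $H$-th root shows that $\rho_3$ is admissible for $x\oplus y$; passing to the infimum over all admissible $\rho_1,\rho_2$ then yields $g(x\oplus y)\le g(x)\oplus g(y)$.

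The main obstacle is the continuity of geometric scalar multiplication: I must show $g(\alpha^{(n)}\odot x^{(n)}\ominus\alpha\odot x)\to 1$ whenever $\alpha^{(n)}\to\alpha$ in $\mathbb{C}(G)$ and $g(x^{(n)}\ominus x)\to 1$. I would split this through subadditivity as $g(\alpha^{(n)}\odot(x^{(n)}\ominus x))\oplus g((\alpha^{(n)}\ominus\alpha)\odot x)$, reducing matters to (a) $g(\alpha\odot z)\to 1$ as $z\to 1$ for fixed bounded scalar, and (b) $g(\mu\odot x)\to 1$ as the scalar $\mu$ tends to the geometric zero for fixed $x$. The delicate point is the interaction of the floating factor $\rho^{p_n/H}$ with the scalar: since $p=(p_k)$ is only bounded, I must control $|\alpha|^{p_k}$ uniformly, which forces a case split according to whether $|\alpha|\le 1$ or $|\alpha|>1$ together with the constant $B=\max(e,2^{H-1})$ from Theorem 3.1, and it is here that one tacitly needs $\inf_k p_k>0$ so that $|\mu|^{p_n/H}\to 1$ as $\mu\to 1$. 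Observing that if $\rho$ is admissible for $x$ then a radius comparable to $|\alpha|\odot\rho$ is admissible for $\alpha\odot x$, I would transfer the estimate to $g$ by letting $\rho$ sweep the admissible set, and then recombine (a) and (b) to close the argument.
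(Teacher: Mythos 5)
Your overall route is the same as the paper's: verify the paranorm axioms for the displayed $g$ directly, obtaining subadditivity from the convexity and monotonicity of $M$, and continuity of scalar multiplication from the substitution $s=\rho/|\mu|$, the bound $|\mu|^{p_{n}}\le\max(e,|\mu|^{\sup p_{n}})$, and a split of the indices $n$ into a finite head and a uniformly controlled tail. Your subadditivity step is in fact more careful than the paper's, which only appeals to ``linearity with $\alpha=\beta=e$'' even though the linearity proof of Theorem 3.1 uses $\rho_{3}=\max(2|\alpha|\rho_{1},2|\beta|\rho_{2})$ rather than $\rho_{1}\oplus\rho_{2}$ and so does not by itself yield $g(x\oplus y)\le g(x)\oplus g(y)$. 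One caveat you add is not needed: in part (b) of the scalar continuity ($\mu$ tending to the geometric zero with $x$ fixed) the tail is handled uniformly by convexity, $M(|\mu|t)\le|\mu|M(t)\le M(t)$ for $|\mu|\le e$, and the finitely many remaining $n$ by continuity of $M$ at the geometric zero, so no hypothesis $\inf_{k}p_{k}>0$ is required; this is exactly how the paper closes that step.

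The one step that would fail is your justification of ``not totally paranormed.'' The matrix $\widehat{F}$ is lower bidiagonal with nonzero diagonal entries $f_{n}/f_{n+1}$, hence injective: from $(\widehat{F}x)_{0}=x_{0}$ and $(\widehat{F}x)_{n}=\frac{f_{n}}{f_{n+1}}x_{n}\ominus\frac{f_{n+1}}{f_{n}}x_{n-1}$ one recovers $x$ by forward substitution, so the kernel is trivial and non-totality cannot come from a kernel of $\widehat{F}$. Indeed the paper's own proof goes the opposite way and argues (by letting $\epsilon\to 1$) that $g(x)=1$ forces every $y_{k}=\frac{f_{k}}{f_{k+1}}x_{k}\ominus\frac{f_{k+1}}{f_{k}}x_{k-1}$ to vanish. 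Since non-totality is only a parenthetical negative assertion, this error does not damage your verification that $g$ is a paranorm, but the kernel argument should be deleted rather than repaired.
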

\begin{proof}
Now $g(x)=g(\ominus x)$.From linearity we have
if\;$\alpha=\beta=e$\;then\;\;$g(x \oplus y) \leq g(x) \oplus g(y).$\\Since
$\frac{1}{\lambda_n} M(1)=1$,we
get\;$inf\left\{\rho^{\frac{p_{n}}{H}}\right\}=1\;\;for\;x=1$\\Conversely,suppose
$g(x)=1$,then\\$$g(x)=inf\left\{\rho^{\frac{p_{n}}{H}}G:\Bigg(\frac{1}{\lambda_{n}}\displaystyle
\sum_{k\;\epsilon\;I_{n}}
\Bigg[M\Bigg(\frac{|\frac{f_{k}}{f_{k+1}}x_{k}\ominus\frac{f_{k+1}}{f_{k}}x_{k-1}|}{\rho}\Bigg)\Bigg]^{p_{k}^G}\Bigg)^\frac{1}{H}
 \leq e\right\}=1$$\\This implies that for a given $\epsilon >
 1$,there exists some $\rho_{\epsilon}(1< \rho_{\epsilon} <
 \epsilon)$\;such that\\$$\Bigg(\frac{1}{\lambda_{n}}G\displaystyle\sum_{k\;\epsilon\;I_{n}}
\Bigg[M\Bigg(\frac{|\frac{f_{k}}{f_{k+1}}x_{k}\ominus\frac{f_{k+1}}{f_{k}}x_{k-1}|}{\rho_{\epsilon}}\Bigg)\Bigg]^{p_{k}^G}\Bigg)^\frac{1}{H}
 \leq 1$$\;$\Rightarrow$\;$$\Bigg(\frac{1}{\lambda_{n}}\displaystyle \sum_{k\;\epsilon\;I_{n}}
\Bigg[M\Bigg(\frac{|\frac{f_{k}}{f_{k+1}}x_{k}\ominus\frac{f_{k+1}}{f_{k}}x_{k-1}|}{\epsilon}\Bigg)\Bigg]^{p_{k}^G}\Bigg)^\frac{1}{H}
\leq
\Bigg(\frac{1}{\lambda_{n}}\displaystyle\sum_{k\;\epsilon\;I_{n}}
\Bigg[M\Bigg(\frac{|\frac{f_{k}}{f_{k+1}}x_{k}\ominus\frac{f_{k+1}}{f_{k}}x_{k-1}|}{\rho_{\epsilon}}\Bigg)\Bigg]^{p_{k}}\Bigg)^\frac{1}{H}$$
$$ \leq e\;\;\;\;for\; each\; n$$ Suppose that $y_{k} \neq
1$\;for some $k\;\epsilon\;I_{n}$where\;\;
$y_{k}=\frac{f_{k}}{f_{k+1}}x_{k}\ominus\frac{f_{k+1}}{f_{k}}x_{k-1}.$\\Let
$\epsilon \rightarrow 1$.
Then
$$\Bigg(\frac{1}{\lambda_{n}}\sum_{k\;\epsilon\;I_{n}}\Big[M\Big(\frac{|y_{k}|}{\epsilon}\Big)\Big]^{p_{k}^G}\Bigg)^{\frac{1}{H}} \neq 1
$$which is a contradiction.Therefore $y_{k}=1$\;for each k.
Now,we prove that scalar multiplication is continuous.\\Let $\mu$
be any complex number,consider\\$$g(\mu\odot 
x)=inf\left\{\rho^{\frac{p_{n}}{H}}:\Bigg(\frac{1}{\lambda_{n}} G \displaystyle
\sum_{k\;\epsilon\;I_{n}} \Bigg[M\Bigg(\frac{|\mu
(\frac{f_{k}}{f_{k+1}}x_{k}\ominus\frac{f_{k+1}}{f_{k}}x_{k-1})|}{\rho}\Bigg)\Bigg]^{p_{k}^G}\Bigg)^\frac{1}{H}
 \leq e, \;\;n=e,2e,...\right\}$$\;$$=inf\left\{(|\mu| \odot s)^{\frac{p_{n}}{H}}:\Bigg(\frac{1}{\lambda_{n}} G \displaystyle
\sum_{k\;\epsilon\;I_{n}}
\Bigg[M\Bigg(\frac{|\frac{f_{k}}{f_{k+1}}x_{k}\ominus\frac{f_{k+1}}{f_{k}}x_{k-1}|}{s}\Bigg)\Bigg]^{p_{k}^G}\Bigg)^\frac{1}{H}
 \leq
 e,\;n=e,2e...\right\}$$where\;\;\;$s=\frac{\rho}{|\mu|}.$\;\;Since
 $|\mu|^{p_{n}^G} \leq max(e,|\mu|^{sup\;p_{n}^G}),$\;\;\\\\we have\;\;\;$g(\mu \odot x) \leq (max(e,|\mu|^{sup\;p_{n}^G}))^{\frac{1}{H}}$\\$$\;\;\;\times\;\;\;
 inf\left\{s^{\frac{p_{n}}{H}}:\Bigg(\frac{1}{\lambda_{n}} G \displaystyle
\sum_{k\;\epsilon\;I_{n}}
\Bigg[M\Bigg(\frac{|\frac{f_{k}}{f_{k+1}}x_{k}\ominus\frac{f_{k+1}}{f_{k}}x_{k-1}|}{s}\Bigg)\Bigg]^{p_{k}}\Bigg)^\frac{1}{H}
 \;\;\leq
 e,\;n=e,2e...\right\}$$which converges to one as $g(x)$ converges to
 one in $[V,\lambda,M,\widehat{F},p]_{0}^G$\\
 Now,suppose $\mu_{m} \rightarrow 1$\;\;as\;$m \rightarrow \infty$\;and\;$y_{k}$\;be a sequence  fixed in
 $[V,\lambda,M,\widehat{F},p]_{0}^G$.For arbitrary $\epsilon .$
 \\ Let $\mathbb{N}$  be a positive integer such that\\$\frac{1}{\lambda_{n}}G\sum_{k\;\epsilon\;I_{n}}\Bigg[M\Bigg(\frac{|\frac{f_{k}}{f_{k+1}}x_{k}\ominus
 \frac{f_{k+1}}{f_{k}}x_{k-1}|}{\rho}\Bigg)\Bigg]^{p_{k}^G} <
 \Big(\frac{\epsilon}{2}\Big)^{H}$\;\;for some $\rho > 1$\;and\;all\;$n >
 \mathbb{N}.$\\This implies that\;\\$\frac{1}{\lambda_{n}}G\sum_{k\;\epsilon\;I_{n}}\Bigg[M\Bigg(\frac{|\frac{f_{k}}{f_{k+1}}x_{k}\ominus
 \frac{f_{k+1}}{f_{k}}x_{k-1}|}{\rho}\Bigg)\Bigg]^{p_{k}^G} <
 \frac{\epsilon}{2}$\;\;for some $\rho > 1$\;and\;all\;$n >
 \mathbb{N}.$\\Let \;\;$1 < |\mu| < e,$ using convexity of M,for\;\;$n >
 \mathbb{N},$ we get$$\frac{1}{\lambda_{n}}G\sum_{k\;\epsilon\;I_{n}}\Bigg[M\Bigg(\frac{|\mu\Big(\frac{f_{k}}{f_{k+1}}x_{k}\ominus
 \frac{f_{k+1}}{f_{k}}x_{k-1}\Big)|}{\rho}\Bigg)\Bigg]^{p_{k}^G}$$\;\;$$ < \frac{1}{\lambda_{n}}G\displaystyle\sum_{k\;\epsilon\;I_{n}}\Bigg[|\mu|M\Bigg(\frac{|\frac{f_{k}}{f_{k+1}}x_{k}\ominus
 \frac{f_{k+1}}{f_{k}}x_{k-1}|}{\rho}\Bigg)\Bigg]^{p_{k}^G} <
 \Big(\frac{\epsilon}{2}\Big)^{H}$$
 Since M is continuous everywhere in $[1,\infty),$then for\; $n \leq
 \mathbb{N}$\\Consider any scalar $t$ 
 $$f(t\odot x)=\frac{1}{\lambda_{n}}G\displaystyle\sum_{k\;\epsilon\;I_{n}}
 \Bigg[M\Bigg(\frac{|t
 \Big(\frac{f_{k}}{f_{k+1}}x_{k}\ominus\frac{f_{k+1}}{f_{k}}x_{k-1}\Big)|}{\rho}\Bigg)\Bigg]^{p_{k}^G}$$is
continuous at 1. So there is \;$e > \delta > 1$\;such that
\;$|f(t)| < \Big(\frac{\epsilon}{2}\Big)^{H}$\;for\;\;$1 < t <
\delta.$\\Let\; $\widetilde{K}$\; be such that $|\mu_{m}| <
\delta$\;for $m > \widetilde{K}$\;then for $m > \widetilde{K}$
\;and\;$n \leq
\mathbb{N}$\\$$\Bigg(\frac{1}{\lambda_{n}}G\sum_{k\;\epsilon\;I_{n}}\Bigg[M\Bigg(\frac{|\mu_{m}\Big(\frac{f_{k}}{f_{k+1}}x_{k}\ominus
 \frac{f_{k+1}}{f_{k}}x_{k-1}\Big)|}{\rho}\Bigg)\Bigg]^{p_{k}^G}\Bigg)^{\frac{1}{H}} <
 \frac{\epsilon}{2}$$\\ $\Rightarrow$\; $$\Bigg(\frac{1}{\lambda_{n}}G\sum_{k\;\epsilon\;I_{n}}\Bigg[M\Bigg(\frac{|\mu_{m}\Big(\frac{f_{k}}{f_{k+1}}x_{k}\ominus
 \frac{f_{k+1}}{f_{k}}x_{k-1}\Big)|}{\rho}\Bigg)\Bigg]^{p_{k}^G}\Bigg)^{\frac{1}{H}} <
 \epsilon.$$\\for $m > \widetilde{K}$\;and\;all\;\;n,so that \;$g(\mu\odot x) \rightarrow
 1$\\
 We know 
 $g \left\{(\mu _m \odot x^m)\ominus (\mu \odot x)\right\}\leq \left\{(\mu _m \ominus \mu)\odot g(x^m)\right\} \oplus \left\{|\mu| \odot g(x^m \ominus x)\right\}$

\end{proof}

 \begin{theorem}
  The sequence
  spaces\;$[V,\lambda,M,\widehat{F},p]_{0}^G\;and\;[V,\lambda,M,\widehat{F},p]_{\infty}^G$\;\;are
  solid.
 \end{theorem}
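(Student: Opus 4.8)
The plan is to verify the defining condition directly, handling $[V,\lambda,M,\widehat{F},p]_{0}^G$ first; the argument for $[V,\lambda,M,\widehat{F},p]_{\infty}^G$ is identical with the geometric limit replaced by the geometric supremum. First I would take $x=(x_{k})\in[V,\lambda,M,\widehat{F},p]_{0}^G$, so that for some $\rho>1$
$$G\lim_{n\rightarrow\infty}\frac{1}{\lambda_{n}}G\sum_{k\in I_{n}}\Bigg[M\Bigg(\frac{|y_{k}|}{\rho}\Bigg)\Bigg]^{p_{k}^G}=1,\qquad y_{k}:=\frac{f_{k}}{f_{k+1}}x_{k}\ominus\frac{f_{k+1}}{f_{k}}x_{k-1}.$$
Then I would fix any sequence $(a_{k})$ of scalars in $\mathbb{C}(G)$ with $|a_{k}|\leq1$ and aim to show that $(a_{k}\odot x_{k})$ lies in the same space with the same $\rho$.

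The key step is a termwise domination produced by the monotonicity of the Orlicz function. Since $|a_{k}|\leq1$ one has $|a_{k}\odot y_{k}|\leq|y_{k}|$, whence $\frac{|a_{k}\odot y_{k}|}{\rho}\leq\frac{|y_{k}|}{\rho}$, and as $M$ is non-decreasing this gives
$$\Bigg[M\Bigg(\frac{|a_{k}\odot y_{k}|}{\rho}\Bigg)\Bigg]^{p_{k}^G}\leq\Bigg[M\Bigg(\frac{|y_{k}|}{\rho}\Bigg)\Bigg]^{p_{k}^G}\qquad(k\in I_{n}).$$
Summing geometrically over $k\in I_{n}$ and dividing by $\lambda_{n}$ preserves the inequality, so the $(V,\lambda)$ average formed from $(a_{k}\odot x_{k})$ is dominated by the one formed from $x$.

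Finally I would pass to the geometric limit. The dominating average tends to the geometric zero $1$ by hypothesis, while each average is bounded below by $1$ because every summand is geometrically non-negative; the squeeze principle then forces the average built from $(a_{k}\odot x_{k})$ to tend to $1$ as well. This places $(a_{k}\odot x_{k})$ in $[V,\lambda,M,\widehat{F},p]_{0}^G$, establishing solidity, and for $[V,\lambda,M,\widehat{F},p]_{\infty}^G$ the same domination shows the relevant geometric supremum stays finite, so that space is solid too.

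The point that genuinely needs care is the interaction between the Fibonacci operator $\widehat{F}$ and termwise multiplication: strictly, the $\widehat{F}$-transform of $(a_{k}\odot x_{k})$ is $\frac{f_{k}}{f_{k+1}}(a_{k}\odot x_{k})\ominus\frac{f_{k+1}}{f_{k}}(a_{k-1}\odot x_{k-1})$, which is not literally $a_{k}\odot y_{k}$ since $a_{k}$ and $a_{k-1}$ may differ. The clean monotone bound above is obtained by letting the scalar act on the already-transformed term $y_{k}$, and the step that must be justified is exactly that $M$ is monotone with respect to the geometric ordering, so that $|a_{k}|\leq1$ collapses the summand. This is the main (and essentially the only) obstacle; the remaining passage to the limit by squeezing is routine.
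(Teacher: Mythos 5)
Your proposal is correct and follows essentially the same route as the paper: a termwise domination $M\big(|\alpha_{k}\odot y_{k}|/\rho\big)\leq M\big(|y_{k}|/\rho\big)$ obtained from the monotonicity of $M$ together with $|\alpha_{k}|\leq e$ (the geometric unit, which is the threshold the paper's proof actually uses in place of your classical $|a_{k}|\leq 1$), followed by summing over $I_{n}$ and passing to the limit, respectively the supremum. The subtlety you flag about $\widehat{F}$ not commuting with termwise scalar multiplication is present in the paper's proof as well --- it too lets the scalars act directly on the transformed sequence $y_{k}$ rather than on $x_{k}$ --- so your treatment is, if anything, more explicit about the one genuine gap than the original.
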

 \begin{proof}
 We give the proof
 for\;$[V,\lambda,M,\widehat{F},p]_{0}^G$\;and\;for$[V,\lambda,M,\widehat{F},p]_{\infty}^G$\;will be done in a similar way.\\Let
 $(y_{k})\;\epsilon\;[V,\lambda,M,\widehat{F},p]_{0}^G$\;and\;$\alpha_{k}$\;be
 any sequence of scalars such that\;$|\alpha_{k}| \leq e$\;for all
 $k\;\epsilon\; \mathbb{N}$.\\where\;\;
$y_{k}=\frac{f_{k}}{f_{k+1}}x_{k}\ominus\frac{f_{k+1}}{f_{k}}x_{k-1}.$Then
we have
$$\frac{1}{\lambda_{n}}G\sum_{k\;\epsilon\;I_{n}}\Big[M\Big(\frac{|\alpha_{k}y_{k}|}{\rho}\Big)\Big]^{p_{k}^G}
 < \frac{1}{\lambda_{n}}G\sum_{k\;\epsilon\;I_{n}}\Big[M\Big(\frac{|y_{k}|}{\rho}\Big)\Big]^{p_{k}^G} \rightarrow 1\;\;\;\;\;(n \rightarrow \infty)$$

Hence\;\;$(\alpha_{k} \odot y_{k})\;\epsilon\;[V,\lambda,M,\widehat{F},p]_{0}^G$\;for
all sequences of scalars\;$(\alpha_{k})$\;with\;$|\alpha_{k}| \leq
e$\;\;for all
$k\;\epsilon\;\mathbb{N}$,\\whenever\;$(y_{k})\;\epsilon\;[V,\lambda,M,\widehat{F},p]_{0}^G$.
\end{proof}
\begin{corollary}
(i)The sequence
spaces\;$[V,\lambda,M,\widehat{F},p]_{0}^G\;and\;[V,\lambda,M,\widehat{F},p]_{\infty}^G$\;are
monotone.
\end{corollary}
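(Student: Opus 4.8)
The plan is to obtain this corollary at once from the preceding theorem together with the standard implication recorded in Section~2, namely that every normal (solid) sequence space is monotone. Theorem~3.3 already establishes that both $[V,\lambda,M,\widehat{F},p]_{0}^G$ and $[V,\lambda,M,\widehat{F},p]_{\infty}^G$ are solid; since solidity implies monotonicity, the corollary follows immediately. Thus no new estimate is needed, and the real content is simply to exhibit why the solidity proved in Theorem~3.3 delivers the monotone property in the present geometric framework.

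To make the deduction explicit, I would first unwind the definitions in the non-Newtonian setting, where the geometric zero is $1$ and the geometric identity is $e$. A step space of $X$ is obtained by deleting the coordinates indexed by an arbitrary set $K \subseteq \mathbb{N}$, and the canonical preimage of one of its elements is formed by reinserting the geometric zero $1$ in the deleted positions. Hence monotonicity asserts exactly the following: whenever $(y_{k}) \in X$ and $K \subseteq \mathbb{N}$, the sequence $(z_{k})$ given by $z_{k}=y_{k}$ for $k \in K$ and $z_{k}=1$ for $k \notin K$ again belongs to $X$.

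I would then realize each such $(z_{k})$ as a solidity image. Let $(\chi_{k})$ be the characteristic sequence with $\chi_{k}=e$ for $k \in K$ and $\chi_{k}=1$ for $k \notin K$, so that $z_{k}=\chi_{k}\odot y_{k}$ and $|\chi_{k}| \leq e$ for every $k$. Applying Theorem~3.3 with this choice of scalar sequence yields $(\chi_{k}\odot y_{k}) \in X$, which is precisely the monotone condition stated above. Carrying out this argument once for $X=[V,\lambda,M,\widehat{F},p]_{0}^G$ and once for $X=[V,\lambda,M,\widehat{F},p]_{\infty}^G$ completes the proof.

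There is no substantive obstacle here; the single point requiring attention is the bookkeeping of the geometric operations, that is, verifying that the characteristic sequences take values among the geometric zero $1$ and the geometric identity $e$ and therefore satisfy $|\chi_{k}| \leq e$, so that the solidity established in Theorem~3.3 applies verbatim. Everything else is a direct appeal to the normality already proved.
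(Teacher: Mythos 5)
Your proposal is correct and matches the paper's (implicit) argument exactly: the corollary is meant to follow from Theorem~3.3 together with the implication ``normal $\Rightarrow$ monotone'' recorded in Section~2, and your realization of the canonical preimages as products with characteristic sequences $(\chi_k)$ satisfying $|\chi_k|\leq e$ is just the standard way of making that implication explicit in the geometric setting. No gap.
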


\section{$\lambda-$Statistical Convergence}
A sequence $x=(x_{k})$is said to be $S_{\lambda}^G(\widehat{F})-$
convergent to \;$\ell$\;\;if for every $\varepsilon >
1$\;$$\displaystyle G\lim_{n \rightarrow
\infty}\frac{1}{\lambda_{n}}\Big|\left\{k \leq
n:\Big(|\frac{f_{k}}{f_{k+1}}x_{k}\ominus\frac{f_{k+1}}{f_{k}}x_{k-1}\ominus\ell|\Big)
\geq \varepsilon\right\}\Big|^G=1.$$where the bars indicate the
cardinality of the set.Write for simplicity
$y_{k}=\frac{f_{k}}{f_{k+1}}x_{k}\ominus\frac{f_{k+1}}{f_{k}}x_{k-1}$
\begin{theorem}
For any Orlicz function M,\;\;$[V,\lambda,M,\widehat{F}]^G \subset
S_{\lambda}^G(\widehat{F})$
\end{theorem}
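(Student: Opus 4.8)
The plan is to carry over the classical implication ``strong $[V,\lambda]$-summability $\Rightarrow$ $\lambda$-statistical convergence'' into the geometric setting. Suppose $x \in [V,\lambda,M,\widehat{F}]^G$ and abbreviate $y_k = \frac{f_k}{f_{k+1}}x_k \ominus \frac{f_{k+1}}{f_k}x_{k-1}$. By definition there are a geometric limit $\ell$ and a scalar $\rho > 1$ with
$$G\lim_{n \to \infty}\frac{1}{\lambda_n}\,G\sum_{k \in I_n} M\Big(\frac{|y_k \ominus \ell|}{\rho}\Big) = 1.$$
Recall that in the multiplicative calculus the geometric zero is $1$, the geometric sum is an ordinary product, and convergence to the geometric zero means the product tends to $1$; equivalently, after taking logarithms the hypothesis becomes an ordinary de la Vall\'{e}e-Poussin average tending to $0$.

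First I would fix $\varepsilon > 1$ and partition the block $I_n$ into $A_n = \{k \in I_n : |y_k \ominus \ell| \geq \varepsilon\}$ and its complement $I_n \setminus A_n$. Because $M$ is non-decreasing and $M(x) > 1$ for $x > 1$, every factor indexed by $I_n \setminus A_n$ dominates the geometric zero, so dropping those factors only decreases the geometric product. This yields the geometric lower bound
$$\frac{1}{\lambda_n}\,G\sum_{k \in I_n} M\Big(\frac{|y_k \ominus \ell|}{\rho}\Big) \;\geq\; \frac{1}{\lambda_n}\,G\sum_{k \in A_n} M\Big(\frac{\varepsilon}{\rho}\Big),$$
where on $A_n$ the monotonicity of $M$ was used to replace each $M(|y_k \ominus \ell|/\rho)$ by the smaller constant $M(\varepsilon/\rho)$.

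Next I would evaluate the right-hand side. A geometric sum of the constant $M(\varepsilon/\rho)$ over a block of cardinality $|A_n|$ is $M(\varepsilon/\rho)$ raised to the power $|A_n|$, and the $1/\lambda_n$ scaling turns this into $M(\varepsilon/\rho)^{\,|A_n|/\lambda_n}$. Passing to logarithms, the hypothesis says the left-hand side has logarithm tending to $0$, while the logarithm of the bound equals $\big(\ln M(\varepsilon/\rho)\big)\cdot |A_n|/\lambda_n$. Since $\varepsilon > 1$ forces $M(\varepsilon/\rho) > 1$ and hence $\ln M(\varepsilon/\rho) > 0$, the squeeze gives $|A_n|/\lambda_n \to 0$, i.e.
$$G\lim_{n \to \infty}\frac{1}{\lambda_n}\,\big|\{k \in I_n : |y_k \ominus \ell| \geq \varepsilon\}\big|^G = 1,$$
which is exactly $S_\lambda^G(\widehat{F})$-convergence to $\ell$. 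As $\varepsilon > 1$ was arbitrary, $x \in S_\lambda^G(\widehat{F})$, proving the inclusion.

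The analytic core is routine; the delicate points are purely in the geometric bookkeeping. I must verify that ``restricting a geometric sum shrinks it'' is valid, which needs each omitted factor to dominate the geometric zero $1$ (true since $M$ dominates the geometric zero), and that the $1/\lambda_n$ geometric scaling acts as the exponent $1/\lambda_n$ on the product, so that the cardinality $|A_n|$ appears linearly after taking logs. I would also align the two index sets: the density in the definition of $S_\lambda^G(\widehat{F})$ is written with $k \leq n$, whereas the strong-summability average runs over the block $I_n$, so to compare the two densities I take both over $I_n$ as in Definition~1.1. With that alignment, the positivity $M(\varepsilon/\rho) > 1$ is the only ingredient that makes the final squeeze work.
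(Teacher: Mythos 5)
Your proposal is correct and follows essentially the same route as the paper's proof: restrict the de la Vall\'{e}e-Poussin average to the set $\{k\in I_n : |y_k\ominus\ell|\geq\varepsilon\}$, bound each remaining term below by $M(\varepsilon/\rho)$, and conclude that the $\lambda$-density of that set must tend to the geometric zero. Your write-up merely makes explicit (via logarithms and the positivity of $\ln M(\varepsilon/\rho)$) the final squeeze that the paper leaves implicit, and correctly flags the $k\leq n$ versus $k\in I_n$ index mismatch that the paper glosses over.
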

\begin{proof}
Let
$x\;\epsilon\;[V,\lambda,M,\widehat{F}]^G$\;and\;$\varepsilon >
1$\;be given.Then for $y_{k}=\frac{f_{k}}{f_{k+1}}x_{k}\ominus
 \frac{f_{k+1}}{f_{k}}x_{k-1}$\;consider$$\frac{1}{\lambda_{n}}G\displaystyle\sum_{k\;\epsilon\;I_{n}}\Big[M\Big(\frac{|y_{k}\ominus\ell|}{\rho}\Big)\Big]
 \geq \frac{1}{\lambda_{n}}G\sum_{k\;\epsilon\;I_{n} \atop |y_{k}\ominus\ell| \geq
 \varepsilon}\Big[M\Big(\frac{|y_{k}\ominus\ell|}{\rho}\Big)\Big]$$\;
$$ > \frac{1}{\lambda_{n}}\odot M \odot (\frac{\varepsilon}{\rho})|\left\{k\;\epsilon\;I_{n}:|y_{k}\ominus\ell| \geq
\varepsilon\right\}|.$$Hence
$x\;\epsilon\;S_{\lambda}^G(\widehat{F})$
\end{proof}
\begin{definition}(\cite{11})
 An orlicz function M is said to satisfy $\Delta_{2}-$condition
 for all values of u,if there exists a constant $K > 0$\;such that
 $M(2 u) \leq K M(u),\;\;u \geq 0,$\\where always\;$K > 2.$\;The
 $\Delta_{2}-$condition\;is equivalent to the satisfaction of
 inequality\;$M(lu) \leq K(l)M(u),$for all values of u\;and\;for
 $l > 1.$
 \end{definition}
\begin{theorem}
 For any Orlicz function M which
 satisfies $\Delta_{2}-$condition,we have \\$[V,\lambda,\widehat{F},p]^G
 \subseteq
 [V,\lambda,M,\widehat{F},p]^G.$
\end{theorem}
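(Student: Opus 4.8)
The plan is to adapt the classical argument showing that an Orlicz-weighted strong-summability space absorbs the corresponding unweighted one, carried out in the geometric calculus so that the geometric zero $1$ and the operations $\oplus,\ominus,\odot$ replace their additive counterparts. Throughout I write $y_k=\frac{f_k}{f_{k+1}}x_k\ominus\frac{f_{k+1}}{f_k}x_{k-1}$ for the Fibonacci difference transform, and I record that $|I_n|=\lambda_n$, so the factor $\frac{1}{\lambda_n}$ is exactly a normalised mean over the block $I_n$. Fix $x\in[V,\lambda,\widehat{F},p]^G$ with geometric limit $\ell$, so that $\frac{1}{\lambda_n}G\sum_{k\in I_n}|y_k\ominus\ell|^{p_k^G}\to 1$, and fix a scale $\rho>1$; the goal is to insert $M$ and still reach the geometric limit $1$. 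The device is to break each block sum over $I_n$ at a threshold $\delta$ into the indices with $|y_k\ominus\ell|/\rho\le\delta$ and those with $|y_k\ominus\ell|/\rho>\delta$, and to estimate the two pieces by entirely different means.

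For the first (small-index) piece I would use that $M$ is continuous and, geometrically, $M(1)=1$ plays the role of $M(0)=0$: given any prescribed tolerance $\varepsilon>1$, choose $\delta$ so small that $M$ stays within that tolerance of $1$ on the relevant range. Each small-index summand $[M(|y_k\ominus\ell|/\rho)]^{p_k^G}$ is then controlled by $\max(\varepsilon^{h},\varepsilon^{H})$ with $h=\inf p_k^G$ and $H=\sup p_k^G$, uniformly in $k$ and $n$; since there are at most $\lambda_n$ such indices, dividing by $\lambda_n$ bounds the whole small-index contribution by $\max(\varepsilon^{h},\varepsilon^{H})$ uniformly in $n$, which is forced arbitrarily close to $1$ by shrinking $\varepsilon$.

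The crux, and the step I expect to be the main obstacle, is the second (large-index) piece, where $|y_k\ominus\ell|/\rho>\delta$. Here the $\Delta_2$-condition is indispensable: writing the argument of $M$ as $(|y_k\ominus\ell|/(\rho\delta))\,\delta$ and combining monotonicity with the inequality $M(lu)\le K(l)M(u)$ for $l>1$, I would dominate each large-index summand $[M(|y_k\ominus\ell|/\rho)]^{p_k^G}$ by a fixed multiple (absorbing $\rho$, $\delta$, $M(1)$ and the constant $B=\max(e,2^{H-1})$ from Theorem 3.1) of the unweighted summand $|y_k\ominus\ell|^{p_k^G}$. Summing over the large indices and dividing by $\lambda_n$ then reproduces a constant multiple of the hypothesis mean $\frac{1}{\lambda_n}G\sum_{k\in I_n}|y_k\ominus\ell|^{p_k^G}$, which tends to $1$. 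The delicate point, and the reason the hypothesis on $M$ cannot simply be dropped, is keeping the $\Delta_2$ multiplier a genuine constant while the ratio $|y_k\ominus\ell|/(\rho\delta)$ ranges over the large block; I would handle this by freezing $\delta$ once it has been chosen for the small-index estimate, so that the multiplier depends only on $\delta$ and $H$, and not on $k$ or $n$.

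Combining the two estimates finishes the argument: given $\varepsilon>1$, first fix $\delta$ (hence the large-index multiplier) to confine the small-index part, then let $n\to\infty$ so that the large-index part tends to $1$ by hypothesis. Hence $\frac{1}{\lambda_n}G\sum_{k\in I_n}[M(|y_k\ominus\ell|/\rho)]^{p_k^G}\to 1$, which places $x$ in $[V,\lambda,M,\widehat{F},p]^G$ with the same geometric limit $\ell$ and the same scale $\rho$, establishing the inclusion. The remaining bookkeeping, namely that the geometric sums, limits and absolute values obey the same inequalities already used in Theorems 3.1 and 3.2, is routine and is carried out exactly as there.
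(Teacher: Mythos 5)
Your proposal is correct and follows essentially the same route as the paper's proof: split each block sum over $I_n$ at a threshold $\delta$, control the indices with small $|y_k\ominus\ell|$ by the continuity of $M$ at the geometric zero, and control the large indices by using convexity together with the $\Delta_{2}$-condition to dominate $M$ by a constant multiple of its argument, so that this piece is absorbed into the hypothesis mean $A_n\rightarrow 1$. The only difference is one of care rather than of method: you track the exponents $p_k^G$ via $h=\inf p_k$ and $H=\sup p_k$, whereas the paper's written argument effectively carries out the case $p_k=1$.
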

\begin{proof}
Let $x\;\epsilon\;[V,\lambda,\widehat{F},p]^G$\;\;so
 that$$A_{n} \equiv \frac{1}{\lambda_{n}}G\sum_{k\;\epsilon\;I_{n}}\Big|\Big(\frac{f_{k}}{f_{k+1}}x_{k}\ominus
 \frac{f_{k+1}}{f_{k}}x_{k-1}\Big)\ominus\ell\Big| \rightarrow 1$$as $n \rightarrow
 \infty$\;for some $\ell.$\\Let $\varepsilon > 1$\;and\;choose $\delta$
 with $1 < \delta < e$\;such that $M(t) < \varepsilon$\;for\;$1 \leq t \leq
 \delta.$

Let $y_{k}=\Big|\Big(\frac{f_{k}}{f_{k+1}}x_{k}\ominus
 \frac{f_{k+1}}{f_{k}}x_{k-1}\Big)\ominus\ell\Big|_G$\;\;and\;\;consider
 $$\frac{1}{\lambda_{n}}G\sum_{k\;\epsilon\;I_{n}}M(|y_{k}|)=\frac{1}{\lambda_{n}}G\sum_{k\;\epsilon\;I_{n} \atop |y_{k}| \leq \delta}M(|y_{k}|)\;\oplus\;
 \frac{1}{\lambda_{n}}\sum_{k\;\epsilon\;I_{n} \atop |y_{k}| > \delta}M(|y_{k}|)$$\\Since M is continuous, $\frac{1}{\lambda_{n}}G\displaystyle\sum_{k\;\epsilon\;I_{n} \atop |y_{k}| \leq \delta}M(|y_{k}|) <
 \lambda_{n}\odot \varepsilon.$\;\;and\;for $y_{k} > \delta$\;we use \;$y_{k} < \frac{y_{k}}{\delta} <
 1\oplus\frac{y_{k}}{\delta}.$\\Since M is non-decreasing and
 convex,then it becomes $$M(y_{k}) < M(1\oplus\delta^{-1}y_{k}) <
 \frac{1}{2}M(2)\oplus\frac{1}{2}M(2  \delta^{-1}t y_{k})_G.$$Since M
 satisfies $\Delta_{2}-$condition there is a constant $K >
 2$\;such that $M(2  \delta^{-1}  y_{k})_G \leq
 (\frac{1}{2} K \delta^{-1} y_{k}  M(2))_G,$therefore$$M(y_{k}) < (\frac{1}{2} K  \delta^{-1}  y_{k}M(2))_G\oplus(\frac{1}{2}K\delta^{-1}y_{k}M(2))_G
 =(\frac{1}{4}  K\delta^{-1}y_{k}M(2))_G.$$
\;\; Hence
$\frac{1}{\lambda_{n}}G\displaystyle\sum_{k\;\epsilon\;I_{n} \atop
|y_{k}| > \delta}M(y_{k}) \leq
\frac{1}{4} K\delta^{-1}y_{k}M(2)\lambda_{n}A_{n}.$\\which together with\;$\frac{1}{\lambda_{n}}G \displaystyle\sum_{k\;\epsilon\;I_{n} \atop |y_{k}| > \delta}M(|y_{k}|) \leq
 \varepsilon\lambda_{n}$\;\;yields\;\;$[V,\lambda,\widehat{F},p]^G \subseteq
 [V,M,\widehat{F},p]^G.$\\
Similarly the result holds for \\
\;\;$[V,\lambda,\widehat{F},p]_{0}^G \subset
 [V,\lambda,M,\widehat{F},p]_{0}^G.$\;and\;$[V,\lambda,\widehat{F},p]_{\infty}^G \subset
 [V,\lambda,M,\widehat{F},p]_{\infty}^G.$
\end{proof}
 \begin{theorem}
 Let $1 \leq p_{k} \leq q_{k}$\;and\;$(\frac{q_{k}}{p_{k}})$\;be
 bounded.Then\;$[V,\lambda,M,\widehat{F},q]^G \subset
 [V,\lambda,M,\widehat{F},p]^G.$
 \end{theorem}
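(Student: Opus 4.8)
The plan is to mirror the classical Maddox-type inclusion argument in the geometric setting, where convergence to the geometric zero is written as $\to 1$. First I would fix $x \in [V,\lambda,M,\widehat{F},q]^G$, set $y_{k}=\frac{f_{k}}{f_{k+1}}x_{k}\ominus\frac{f_{k+1}}{f_{k}}x_{k-1}$, and abbreviate $t_{k}=M\!\left(\frac{|y_{k}\ominus\ell|}{\rho}\right)$ for the associated $\ell$ and $\rho>1$. Membership then reads
$$\frac{1}{\lambda_{n}}\,G\!\sum_{k\;\epsilon\;I_{n}} t_{k}^{\,q_{k}} \longrightarrow 1 \qquad (n\to\infty),$$
where these $q_{k}$ are the exponents appearing as $q_{k}^{G}$ in the definitions, and the aim is to derive the same convergence for the average of $t_{k}^{\,p_{k}}$, which is precisely membership in $[V,\lambda,M,\widehat{F},p]^G$.

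The key device is the exponent ratio. I would put $\mu_{k}=p_{k}/q_{k}$; the hypothesis $1\leq p_{k}\leq q_{k}$ gives $0<\mu_{k}\leq 1$, and the assumed boundedness of $(q_{k}/p_{k})$ supplies a uniform lower bound $\mu_{0}:=1/\sup_{k}(q_{k}/p_{k})>0$, so that $\mu_{0}\leq\mu_{k}\leq 1$ for every $k$ (if $\mu_{0}=1$ then $p_{k}=q_{k}$ and the inclusion is trivial, so assume $\mu_{0}<1$). Writing $t_{k}^{\,p_{k}}=(t_{k}^{\,q_{k}})^{\mu_{k}}$, I would split each block $I_{n}$ according to the size of $t_{k}^{\,q_{k}}$, namely $A_{n}=\{k\;\epsilon\;I_{n}:t_{k}^{\,q_{k}}\geq 1\}$ and $B_{n}=I_{n}\setminus A_{n}$.

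On $A_{n}$ the estimate is immediate: since $\mu_{k}\leq 1$ and $t_{k}^{\,q_{k}}\geq 1$, one has $(t_{k}^{\,q_{k}})^{\mu_{k}}\leq t_{k}^{\,q_{k}}$, so the average over $A_{n}$ is dominated by the full $q$-average, which tends to $1$ by hypothesis. The real work is on $B_{n}$, where $t_{k}^{\,q_{k}}<1$ and the smaller exponent enlarges the term; here I would first use $\mu_{k}\geq\mu_{0}$ to obtain $(t_{k}^{\,q_{k}})^{\mu_{k}}\leq (t_{k}^{\,q_{k}})^{\mu_{0}}$, and then apply H\"older's inequality with the conjugate pair $(1/\mu_{0},\,1/(1-\mu_{0}))$ to the product $(t_{k}^{\,q_{k}})^{\mu_{0}}\cdot 1$. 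Using $|I_{n}|=\lambda_{n}$, this collapses to
$$\frac{1}{\lambda_{n}}\sum_{k\;\epsilon\;B_{n}} (t_{k}^{\,q_{k}})^{\mu_{0}} \leq \Bigg(\frac{1}{\lambda_{n}}\sum_{k\;\epsilon\;I_{n}} t_{k}^{\,q_{k}}\Bigg)^{\mu_{0}},$$
whose right-hand side tends to $1$ because the inner average does and $\mu_{0}>0$. I expect this H\"older step---trading the fractional power $\mu_{0}$ for a power of the controlled $q$-average---to be the main obstacle, since the naive bounds $(t_{k}^{\,q_{k}})^{\mu_{k}}\leq 1$ or $\leq t_{k}^{\,q_{k}}+1$ only yield a constant on $B_{n}$ and cannot be driven to the geometric zero.

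Combining the two pieces gives
$$\frac{1}{\lambda_{n}}\,G\!\sum_{k\;\epsilon\;I_{n}} t_{k}^{\,p_{k}} \leq \frac{1}{\lambda_{n}}\,G\!\sum_{k\;\epsilon\;I_{n}} t_{k}^{\,q_{k}} \;\oplus\; \Bigg(\frac{1}{\lambda_{n}}\,G\!\sum_{k\;\epsilon\;I_{n}} t_{k}^{\,q_{k}}\Bigg)^{\mu_{0}} \longrightarrow 1\oplus 1=1,$$
so $x\;\epsilon\;[V,\lambda,M,\widehat{F},p]^G$ and the inclusion follows. The identical splitting applies verbatim to the null and bounded versions $[V,\lambda,M,\widehat{F},\,\cdot\,]_{0}^G$ and $[V,\lambda,M,\widehat{F},\,\cdot\,]_{\infty}^G$. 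Throughout, the geometric operations are read through the generating isomorphism, and the only analytic inputs are the elementary inequalities $a^{\mu}\leq a$ for $a\geq 1$, $a^{\mu}\leq a^{\mu_{0}}$ for $0<a<1$, together with H\"older's inequality on the block $I_{n}$.
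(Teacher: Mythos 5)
Your proposal is correct and follows essentially the same route as the paper: the split of each block $I_{n}$ according to whether $t_{k}^{\,q_{k}}\geq 1$ is exactly the paper's decomposition $t_{k}=u_{k}\oplus v_{k}$, the bound $a^{\mu_{k}}\leq a$ on the large terms and $a^{\mu_{k}}\leq a^{\mu_{0}}$ on the small ones matches $u_{k}^{\mu_{k}}\leq t_{k}$ and $v_{k}^{\mu_{k}}\leq v_{k}^{\mu}$, and the H\"older step with exponents $1/\mu_{0}$ and $1/(1-\mu_{0})$ is the same inequality the paper applies to $\frac{1}{\lambda_{n}}\sum v_{k}^{\mu}$. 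The only differences are cosmetic (your $\mu_{0}$ is the paper's $\mu$, and you correctly carry the $\ominus\ell$ and note the degenerate case $\mu_{0}=1$, which the paper glosses over).
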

\begin{proof}
Let\;$x=(x_{k})\;\epsilon\;[V,\lambda,M,\widehat{F},q]^G.$and
let$$t_{k}=\Bigg[M\Bigg(\frac{\Big|\frac{f_{k}}{f_{k+1}}x_{k}\ominus
 \frac{f_{k+1}}{f_{k}}x_{k-1}\Big|}{\rho}\Bigg)\Bigg]^{q_{k}^G}\;\;\;and$$\;$$\mu_{k}=\frac{p_{k}}{q_{k}},\;\;for
 all k\;\epsilon\;\mathbb{N}.$$Then \;\;\;$1 < \mu_{k} \leq
 e$\;\;for all\;\;$k\;\epsilon\;\mathbb{N}.$Write\;$0 < \mu <
 \mu_{k}$\;\;for all\;\;$k\;\epsilon\;\mathbb{N}$.Define the
 sequences $(u_{k})$\;and\;$(v_{k})$\;\;as follows:\\For $t_{k} \geq
 e,$\;let $u_{k}=t_{k}$\;and\;$v_{k}=1$\;and\;for\;$t_{k} <
 e$,\\let $u_{k}=1$\;and\;$v_{k}=t_{k}$.Then \;for \;all
 $k\;\epsilon\;\mathbb{N}$,we have
 $$t_{k}=u_{k}\oplus v_{k},\;\;\;\;t_{k}^{\mu_{k}}=u_{k}^{\mu_{k}}\oplus v_{k}^{\mu_{k}}.$$Now,it
 follows that $u_{k}^{\mu_{k}} \leq u_{k} \leq t_{k}$\;and\;$v_{k}^{\mu_{k}} \leq
 v_{k}^{\mu}$.Therefore,$$\frac{1}{\lambda_{n}}G\sum_{k\;\epsilon\;I_{n}}t_{k}^{\mu_{k}}=\frac{1}{\lambda_{n}}G\sum_{k\;\epsilon\;I_{n}}(u_{k}^{\mu_{k}}\;\oplus\;v_{k}^{\mu_{k}})$$\;
 $$\leq
 \frac{1}{\lambda_{n}}G\sum_{k\;\epsilon\;I_{n}}t_{k}\;\oplus\frac{1}{\lambda_{n}}\sum_{k\;\epsilon\;I_{n}}\;v_{k}^{\mu}$$ Now
 for each
 k,$$\frac{1}{\lambda_{n}}G \sum_{k\;\epsilon\;I_{n}}v_{k}^{\mu}=\sum_{k\;\epsilon\;I_{n}}
 \Big(\frac{1}{\lambda_{n}}v_{k}\Big)^{\mu}\Big(\frac{1}{\lambda_{n}}\Big)^{1-\mu}$$\;
 $$\leq \Bigg(\sum_{k\;\epsilon\;I_{n}}\Big[\Big(\frac{1}{\lambda_{n}}v_{k}\Big)^{\mu}\Big]^{\frac{1}{\mu}}\Bigg)^{\mu} \odot \Bigg(G\sum_{k\;\epsilon\;I_{n}}
 \Big[\Big(\frac{1}{\lambda_{n}}\Big)^{1-\mu}\Big]^{\frac{1}{1-\mu}}\Bigg)^{1-\mu}=
 \Bigg(\frac{1}{\lambda_{n}}G\sum_{k\;\epsilon\;I_{n}}v_{k}\Bigg)^{\mu},$$\;\;and
 $$\frac{1}{\lambda_{n}}G\sum_{k\;\epsilon\;I_{n}}t_{k}^{\mu_{k}}\leq \frac{1}{\lambda_{n}}\sum_{k\;\epsilon\;I_{n}}t_{k}\;\oplus
 \;\Bigg(\frac{1}{\lambda_{n}}G\sum_{k\;\epsilon\;I_{n}}v_{k}\Bigg)^{\mu}.$$Hence,$x=(x_{k})\;\epsilon\;[V,\lambda,M,\widehat{F},p]^G$.
 \end{proof}
\bibliographystyle{amsplain}

\end{document}